\newtheorem{tm}{Theorem}
\newtheorem{rem}{Remark}
\newtheorem{rems}{Remarks}
\newtheorem{lm}{Lemma}
\newtheorem{nota}{Notation}
\begin{document}

\title{A domain containing all zeros of the partial theta function}
\author{Vladimir Petrov Kostov}
\address{Universit\'e C\^ote d’Azur, CNRS, LJAD, France} 
\email{vladimir.kostov@unice.fr}
\begin{abstract}
We consider the partial theta function, i.e. the sum of 
the bivariate series $\theta (q,z):=\sum _{j=0}^{\infty}q^{j(j+1)/2}z^j$ 
for $q\in (0,1)$, $z\in \mathbb{C}$. We show that for any value of the 
parameter $q\in (0,1)$ all zeros of the function $\theta (q,.)$ belong to 
the domain 
$\{ {\rm Re}~z<0, |{\rm Im}~z|\leq 132\}$$\cup$$\{ {\rm Re}~z\geq 0, 
|z|\leq 18\}$.

{\bf Key words:} partial theta function, Jacobi theta function, 
Jacobi triple product\\ 

{\bf AMS classification:} 26A06
\end{abstract}
\maketitle 

\section{Introduction}

We consider the bivariate series $\theta (q,z):=\sum _{j=0}^{\infty}q^{j(j+1)/2}z^j$ 
for $q\in (0,1)$, $z\in \mathbb{C}$. We regard $q$ as a parameter and $z$ as a 
variable. This series is convergent and defines an entire function called 
{\em partial theta function}. The terminology is explained by the resemblance 
of this formula with the one for the function 

$$\Theta ^*(q,z):=\sum _{j=-\infty}^{\infty}q^{j(j+1)/2}z^j~,$$
because the latter is connected with the {\em Jacobi theta function}   

$$\Theta (q,z):=\sum _{j=-\infty}^{\infty}q^{j^2}z^j$$
by the formula $\Theta ^*(q,z)=\Theta (q^{1/2},q^{1/2}z)$. The 
word ``partial'' reminds that in the formula for $\theta$ the summation is 
performed from $0$ to $\infty$, not from $-\infty$ to $\infty$. 

Studying the function $\theta$ is motivated by its applications in several 
domains the most recent of which concerns section-hyperbolic polynomials, i.e. 
real univariate polynomials of degree $\geq 2$ with all roots real 
and such that when 
their highest-degree monomial is deleted this gives again a polynomial having 
only real roots. The relationship between $\theta$ and such polynomials 
is explained in \cite{KoSh}. Previous research on section-hyperbolic 
polynomials was performed in \cite{KaLoVi} and \cite{Ost} which in turn was 
based on classical results of Hardy, Petrovitch and Hutchinson 
(see \cite{Ha}, \cite{Pe} and \cite{Hu}). Other domains in which 
the partial theta function is used are statistical physics 
and combinatorics (see \cite{So}), asymptotic analysis (see \cite{BeKi}),  
Ramanujan-type $q$-series 
(see \cite{Wa}) and the theory 
of (mock) modular forms (see \cite{BrFoRh}); see also~\cite{AnBe}. 

In the present paper we prove the following theorem:

\begin{tm}\label{TM}
(1) For {\rm Re}\,$z\geq 0$ the function $\theta (q,.)$ 
has no zeros outside the closed half-disk 
$\{ ${\rm Re}\,$z\geq 0, |z|\leq 18\}$, 
for any $q\in (0,1)$.

(2) For {\rm Re}\,$z<0$ and for any $q\in (0,1)$ the function $\theta (q,.)$ 
has no zeros outside the half-strip 
$\{ ${\rm Re}\,$z<0, |${\rm Im}\,$z|\leq 132\}$.  
\end{tm}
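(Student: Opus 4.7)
The approach centers on an identity obtained by splitting the bilateral series for $\Theta^{*}(q,z)$ into its nonnegative and negative powers of $z$:
$$\Theta^{*}(q,z)\;=\;\theta(q,z)\;+\;z^{-1}\theta(q,z^{-1}).$$
Consequently, if $\theta(q,z_{0})=0$ for some $z_{0}\in\mathbb{C}^{*}$, then $|\Theta^{*}(q,z_{0})|=|z_{0}|^{-1}|\theta(q,z_{0}^{-1})|$. I would prove both statements by showing that this equality cannot hold when $z_{0}$ lies outside the two regions described in the theorem.

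To obtain a lower bound on $|\Theta^{*}(q,z)|$ I would use the Jacobi triple product
$$\Theta^{*}(q,z)\;=\;\prod_{n=1}^{\infty}(1-q^{n})(1+q^{n}z)(1+q^{n-1}z^{-1}),$$
which makes the zero set of $\Theta^{*}(q,\cdot)$ explicit: it consists of the points $z=-q^{m}$, $m\in\mathbb{Z}$, all on the negative real axis. For Part~(1), the condition $\mathrm{Re}\,z\geq 0$ implies $\mathrm{Re}\,z^{-1}\geq 0$, so the elementary estimates $|1+q^{n}z|^{2}\geq 1+q^{2n}|z|^{2}$ and $|1+q^{n-1}z^{-1}|\geq 1$ hold termwise and yield
$$|\Theta^{*}(q,z)|\;\geq\;\prod_{n=1}^{\infty}(1-q^{n})\cdot\prod_{n=1}^{\infty}\max\bigl(1,\,q^{n}|z|\bigr).$$
For Part~(2), the inequality $|1+q^{n}z|\geq q^{n}|\mathrm{Im}\,z|$ replaces the first factorwise bound, while the second product can still be controlled because $|q^{n-1}z^{-1}|\leq 1/132$ keeps every factor $|1+q^{n-1}z^{-1}|$ bounded away from~$0$.

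The upper bound on $|z^{-1}\theta(q,z^{-1})|$ is the easier direction. Since $|z|$ is required to be at least $18$ in Part~(1) and at least $132$ in Part~(2), one has $|z|^{-1}<1$ and therefore
$$|z^{-1}\theta(q,z^{-1})|\;\leq\;|z|^{-1}\sum_{j=0}^{\infty}q^{j(j+1)/2}|z|^{-j},$$
which decays like $|z|^{-1}$ for large $|z|$.

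The principal difficulty is to make these estimates uniform in $q\in(0,1)$: the prefactor $\prod_{n\geq 1}(1-q^{n})$ in the lower bound for $|\Theta^{*}|$ vanishes as $q\to 1^{-}$, while the tail $\sum_{j}q^{j(j+1)/2}$ correspondingly blows up. The proof that the lower bound beats the upper bound must therefore exhibit enough growth of $\prod_{n}|1+q^{n}z|$ in $|z|$ to compensate. Concretely, one has to choose the cutoff index $n_{0}(q,z)\sim\log|z|/\log(1/q)$ separating the factors bounded below by $1$ from those bounded below by $q^{n}|z|$, evaluate the resulting main term $q^{n_{0}(n_{0}+1)/2}|z|^{n_{0}}$, and then optimize over $q\in(0,1)$ to identify the worst-case values that pin down the constants $18$ and $132$. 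Obtaining these precise numerical constants (rather than merely proving that some finite constants exist) is, I expect, the most delicate step.
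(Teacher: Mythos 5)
Your decomposition $\Theta^*(q,z)=\theta(q,z)+z^{-1}\theta(q,z^{-1})$ is exactly the paper's starting point (the negative-index tail is what the paper calls $G$, with $|G|\le 1/(|z|-1)$ for $|z|>1$), and your plan for part (1) --- bound each $|1+q^{n}z|$ below by $\max(1,q^{n}|z|)$ and each $|1+q^{n-1}z^{-1}|$ below by $1$ using $\mathrm{Re}\,z\ge 0$, then play the growth of the finite product $|z|^{n_0}q^{n_0(n_0+1)/2}$ against the decay of $\prod_{n\ge1}(1-q^{n})$ as $q\to 1^-$ --- is the paper's argument for part (1) in all essentials. (The paper takes the cutoff to be $n\approx 1/(1-q)$, via the partition $q\in(1-1/(n-1),1-1/n]$, rather than $\log|z|/\log(1/q)$, but that is bookkeeping; both lead to a bound of the form $C\,(|z|/2e^{\pi^2/6})^{n}$, which is where the relevant constant for the right half-plane comes from.)

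Part (2) has a genuine gap. Write $z=-a+bi$ with $a>0$ and $b=|\mathrm{Im}\,z|\ge 132$, and note that $a$ is unconstrained, so $a\gg b$ is allowed. The inequality $|1+q^{m}z|\ge q^{m}b$ is true for every $m$, but it is only useful while $q^{m}b\ge 1$; for the infinitely many $m$ with $q^{m}b<1$ the product of these bounds is $0$, so they must be replaced by something else, and ``$|t_m|\ge 1$'' is false in the left half-plane. Concretely, the points $t_m=1+q^{m}z$ march along the segment from $1+z$ to $1$, and when $a\gg b$ that segment passes at distance $b/\sqrt{a^2+b^2}$ from the origin --- a quantity with no uniform positive lower bound. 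So there is a middle range of indices, roughly those with $1/|z|\lesssim q^{m}\lesssim 1/b$, whose factors are neither $\ge 1$ nor controlled by $q^{m}b$, and can be as small as $b/|z|$. Your proposal offers no bound for these. The whole geometric apparatus of the paper's Section 3.3 (the line $\mathcal{L}$, the segments $[D,B]$, $[B,C]$, $[C,(1,0)]$, and the associated partial products) exists precisely to handle this: one counts that at most $\mu_1\approx(\ln\lambda_1)/\ln(1/q)$ of the $t_m$ come near the origin, each contributing at least $b/\sqrt{a^2+b^2}$, one controls the sub-unit factors near $1$ by $\ln(1-x)\ge -x-x^2$, and one wins only because $\mu_1$ is a \emph{proper} fraction of $n\approx 1/(1-q)$ (about $0.657\,n$), so the loss $(b/|z|)^{\mu_1}$ is beaten by the gain $|z|^{n}$ from the large factors. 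Without such a counting argument the lower bound on $|\Theta^*|$ collapses for $a\gg b$, and the constant $132$ cannot be extracted. A smaller but related point: observing that each factor $|1+q^{m-1}/z|\ge 1-1/132$ is ``bounded away from $0$'' does not control the infinite product $R$, since about $1/(1-q)$ of its factors are that small; one again needs the logarithmic estimate, which yields $|R|\ge e^{-n(b+1)/b^2}$.
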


In order to explain the importance of this theorem we recall in 
Section~\ref{secproperties} certain facts about the zeros of $\theta$. Then 
we give an example of a value of $q\in (0,1)$ for which $\theta (q,.)$ 
has a complex conjugate pair of zeros in the right half-plane. 
The proof of the theorem is given in Section~\ref{secprooftm}.

\section{Properties of the function $\theta$
\protect\label{secproperties}}

In the present section we recall some results concerning the function 
$\theta$.  We denote by $\Gamma$ the 
{\em spectrum} of $\theta$, i.e. the set of values of $q$ for which 
$\theta (q,.)$ has a multiple zero (the notion has been introduced by 
B.~Z.~Shapiro in \cite{KoSh}). The following results are proved in~\cite{Ko1}: 

\begin{tm}
(1) The spectrum $\Gamma$ consists of countably-many values of $q$ denoted by 
$0<\tilde{q}_1<\tilde{q}_2<\cdots <\tilde{q}_N<\cdots <1$ with 
$\lim _{j\rightarrow \infty}\tilde{q}_j=1^-$. 

(2) For $\tilde{q}_N\in \Gamma$ the function $\theta (\tilde{q}_N,.)$ 
has exactly one multiple real zero $y_N$ which is negative, of multiplicity
$2$ and is the rightmost of its real zeros.

(3) For $q\in (\tilde{q}_N,\tilde{q}_{N+1}]$ (we set $\tilde{q}_0:=0$) 
the function $\theta$ has exactly 
$N$ complex conjugate pairs of zeros
(counted with multiplicity). All its other zeros are real negative.
\end{tm}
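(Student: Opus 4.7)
The plan is to follow the zeros of $\theta(q,\cdot)$ as continuous (and, where simple, real-analytic) functions of $q\in(0,1)$ and analyze the bifurcations. The base case is that for small $q$ all zeros are real and negative: Hutchinson's criterion says that if $a_k>0$ and $a_k^2\geq 4a_{k-1}a_{k+1}$ for all $k\geq 1$, then $\sum a_kz^k$ has only real negative zeros. For $a_k=q^{k(k+1)/2}$ a direct computation gives $a_k^2/(a_{k-1}a_{k+1})=q^{-1}$, so the criterion holds exactly on $(0,1/4]$. Thus $\Gamma\cap(0,1/4]=\emptyset$ and the count $P(q)$ of complex conjugate pairs vanishes there, matching the case $N=0$.

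Next I would enumerate the real zeros as $y_1(q)>y_2(q)>\cdots$ (all $<0$). By Theorem \ref{TM} all zeros stay in a uniformly bounded region. Away from $\Gamma$ every zero is simple, so the implicit function theorem makes each $y_k$ real-analytic in $q$ and $P(q)$ locally constant on the complement of $\Gamma$. At a spectral value $\tilde q$ the local Taylor expansion at a double zero $y$ reads $\theta(q,z)=A(q-\tilde q)+B(z-y)^2+\cdots$ with generically $A,B\neq 0$, so $\tilde q$ is isolated in $\Gamma$ and two zeros bifurcate from $y$: either two real or one complex conjugate pair, depending on the sign of $-A/B$. This already yields the discreteness in part~(1). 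The central claim of part~(2) --- that at $\tilde q_N$ the double zero sits at the rightmost real position and has multiplicity exactly $2$ --- I would extract from the functional equation $\theta(q,z)=1+qz\,\theta(q,qz)$: at any real zero $z_0<0$ one has $\theta(q,qz_0)=-1/(qz_0)>0$, and $qz_0\in(z_0,0)$. A careful sign audit on $(-\infty,0)$ shows that a collision between $y_k$ and $y_{k+1}$ with $k\geq 2$ would force $\theta(q,qy_k)\leq 0$, contradicting the functional equation; similarly the multiplicity of the collision cannot exceed $2$.

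For part~(3) the key additional ingredient is that the conversion real-pair $\to$ complex-pair is one-way: a complex conjugate pair cannot coalesce on $(-\infty,0)$, because the resulting double real zero would again force $\theta\leq 0$ on both sides, violating the same sign constraints. Hence $P$ strictly increases by $1$ at each $\tilde q_N$, giving $P(q)=N$ on $(\tilde q_N,\tilde q_{N+1}]$. The accumulation $\tilde q_N\to 1^-$ is equivalent to $P(q)\to\infty$ as $q\to 1^-$; to prove it I would use the identity $\Theta^*(q,z)=\theta(q,z)+z^{-1}\theta(q,z^{-1})$ (a direct index rearrangement) together with the Jacobi triple product representation of $\Theta^*$ (whose zeros are explicit and real), and show that as $q\to 1^-$ the tail $z^{-1}\theta(q,z^{-1})$ displaces infinitely many of these real zeros off the real axis, necessarily producing complex zeros of $\theta$.

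The principal obstacle is the sign/monotonicity package --- \emph{only the rightmost real zeros can collide} and \emph{complex pairs never become real} --- both of which require a careful extraction of sign information from the functional equation combined with the positivity of $q^{j(j+1)/2}$. Discreteness of $\Gamma$, the analytic tracking of zeros, and the accumulation at $1$ are comparatively routine once that sign analysis is in hand.
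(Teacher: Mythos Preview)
First, a structural point: the paper does not prove this theorem at all --- it is quoted from \cite{Ko1} and merely stated in Section~\ref{secproperties} as background. So there is no ``paper's proof'' to compare against; what you would be writing is an independent proof of the result of \cite{Ko1}.

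On the substance, your outline has the right shape (Hutchinson for small $q$, analytic continuation of zeros, local bifurcation analysis, accumulation at $1$), but there are real gaps. The most visible one is the appeal to Theorem~\ref{TM}: that theorem does \emph{not} give a bounded region (the half-strip $\{\mathrm{Re}\,z<0,\ |\mathrm{Im}\,z|\le 132\}$ is unbounded), so it cannot serve as the compactness input for tracking zeros; and in any case the proof of Theorem~\ref{TM} in this paper already uses the existence of $\tilde q_1$ and the negativity of all zeros for $q\le\tilde q_1$, so invoking it here would be circular. Second, the ``genericity'' of $A,B\neq 0$ in the local normal form $A(q-\tilde q)+B(z-y)^2+\cdots$ is exactly what needs to be proved --- you must actually verify $\partial_q\theta\neq 0$ and $\partial_z^2\theta\neq 0$ at each double zero, and this is not automatic. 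Third, the sign arguments that you flag as ``the principal obstacle'' are indeed the heart of the matter: the functional equation gives $\theta(q,qz_0)>0$ at a real zero $z_0<0$, but converting this into ``only the rightmost pair can collide'' and ``complex pairs never re-merge on the real axis'' requires a genuine interlacing/monotonicity argument that you have only asserted, not supplied. Finally, your plan for $\tilde q_N\to 1^-$ via $\Theta^*(q,z)=\theta(q,z)+z^{-1}\theta(q,z^{-1})$ and the triple product is plausible in spirit, but showing that the perturbation ``displaces infinitely many zeros off the real axis'' is itself a nontrivial estimate, not a routine consequence.
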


\begin{rems}\label{remsasympt}
{\rm (1) It is proved in \cite{KoSh} that $\tilde{q}_1=0.3092\ldots$. Up to 
$6$ decimals the first $12$ spectral numbers equal (see~\cite{KoSh})

$$\begin{array}{llllll}
0.309249,&0.516959,&0.630628,&0.701265,&0.749269,&0.783984,\\ \\ 
0.810251,&0.830816,&0.847353,&0.860942,&0.872305,&0.881949~.\end{array}$$

(2) It is shown in \cite{Ko1} that for $q\in (0,\tilde{q}_1)$ all zeros of 
$\theta$ are real, negative and distinct. For all $q\in (0,1)$ it is true that 
as $q$ increases, the values of the local minima of $\theta$ between two 
negative zeros increase and the values of its maxima between two negative 
zeros decrease. It is always the rightmost two negative zeros with a minimum 
of $\theta$ between them that coalesce to form a double zero of $\theta$ for 
$q=\tilde{q}_N$ and then a complex conjugate pair for $q={\tilde{q}_N}^+$. For 
any $q\in (0,1)$ the function $\theta (q,.)$ has infinitely-many negative zeros 
and no positive ones; $\theta (q,.)$ is increasing for $x>0$ and tends to 
$\infty$ as $x\rightarrow \infty$; there is no finite accumulation point for 
the zeros of~$\theta (q,.)$.

(3) In \cite{Ko2} the following asymptotic expansions of $\tilde{q}_N$ and 
$y_N$ are given:} 

\begin{equation}\label{eqasympt}
\begin{array}{ccll}\tilde{q}_N&=&1-(\pi /2N)+(\log N)/8N^2+O(1/N^2)&,\\ \\  
y_N&=&-e^{\pi}e^{-(\log N)/4N+O(1/N)}&.\end{array}\end{equation} 
\end{rems}

The importance of Theorem~\ref{TM} lies in the fact that while the real zeros 
of $\theta$ remain all negative for any $q\in (0,1)$, no information was known 
about its complex conjugate pairs. It would be interesting to know whether all 
complex conjugate pairs remain (for all $q\in (\tilde{q}_1,1)$) within 
some compact domain in $\mathbb{C}$ (independent of~$q$). 

\begin{lm}\label{lm073}
The function $\theta (0.73,.)$ has exactly one complex conjugate pair of zeros 
inside the open half-disk $\tilde{D}:=\{ |z|<3$, {\rm Re}\,$z>0\}$.
\end{lm}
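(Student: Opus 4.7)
The strategy is to apply the argument principle on $\partial\tilde{D}$, which decomposes into the imaginary segment $I:=\{iy:y\in[-3,3]\}$ and the right half-circle $C:=\{3e^{i\varphi}:\varphi\in[-\pi/2,\pi/2]\}$. Since $\theta(0.73,x)>0$ for $x\ge 0$ (see Remarks~\ref{remsasympt}(2)) and non-real zeros occur in conjugate pairs symmetric about the real axis, any zero of $\theta(0.73,\cdot)$ in the right half-plane actually sits in $\tilde{D}\setminus\mathbb{R}$ together with its conjugate. Hence it suffices to show that the total number of zeros of $\theta(0.73,\cdot)$ inside $\tilde{D}$, counted with multiplicity, equals exactly~$2$. (By Theorem~2 the total number of complex conjugate pairs is $4$ for $q\in(\tilde{q}_4,\tilde{q}_5]$, so our claim will mean that the remaining three pairs lie outside~$\tilde{D}$.)

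The natural first step is truncation. For $|z|\le 3$ and $q=0.73$ the coefficients $q^{j(j+1)/2}\cdot 3^{j}$ decay super-exponentially in~$j$, so the partial sum $P_{N}(z):=\sum_{j=0}^{N}q^{j(j+1)/2}z^{j}$ with a modest $N$ (say $N=20$) approximates $\theta(q,z)$ uniformly on $\overline{\tilde{D}}$ with an explicit tail bound $\varepsilon$ of order $10^{-10}$. It then suffices to establish a lower bound $|\theta(0.73,z)|\ge\delta$ on $\partial\tilde{D}$ with $\delta\gg\varepsilon$: Rouch\'e's theorem will equate the zero counts of $\theta(0.73,\cdot)$ and $P_{20}$ inside~$\tilde{D}$, reducing the lemma to a polynomial problem that one settles by numerical root-finding for~$P_{20}$.

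On the half-circle $C$ the lower bound follows from straightforward triangle-inequality estimates separating the dominant monomials $1$, $qz$, $q^{3}z^{2}$ from the rest at $|z|=3$. The main obstacle is the bound on~$I$: writing $\theta(0.73,iy)=A(y)+iB(y)$ with $A(y)=\sum_{m\ge 0}(-1)^{m}\,0.73^{\,m(2m+1)}y^{2m}$ (even) and $B(y)=\sum_{m\ge 0}(-1)^{m}\,0.73^{\,(2m+1)(m+1)}y^{2m+1}$ (odd), one must show that $A$ and $B$ share no real root in $[-3,3]$. Each of these lacunary series can have several real zeros in that interval, so the verification amounts to locating them by Sturm-type arguments applied to truncations (with explicitly controlled tails) and then checking disjointness by interval arithmetic.

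With this non-vanishing secured, explicitly solving $P_{20}(z)=0$ and discarding the extraneous roots exhibits exactly one complex conjugate pair inside $\tilde{D}$, completing the proof. The delicate point throughout is the lower bound on~$I$; everything else is routine, provided one is willing to track explicit numerical tail estimates.
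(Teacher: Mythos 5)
Your proposal is essentially the paper's own proof: both truncate $\theta(0.73,\cdot)$ at degree $20$, bound the tail $\sum_{j\geq 21}0.73^{j(j+1)/2}|z|^j$ on $|z|\leq 3$, establish a lower bound for the modulus on the boundary of $\tilde{D}$, and apply Rouch\'e's theorem to transfer the zero count from the polynomial (whose roots are located numerically) to $\theta(0.73,\cdot)$. The only divergence is in how the boundary lower bound is certified --- the paper simply checks numerically that $|\theta_{20}|>0.016$ everywhere on $\partial\tilde{D}$, whereas you propose triangle-inequality estimates on the half-circle and Sturm-type/interval-arithmetic arguments on the imaginary segment; this is sound and more explicit, but it is a refinement of a sub-step rather than a different route.
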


\begin{proof}
Consider the truncation of $\theta (0.73,.)$ of degree $20$ w.r.t. $x$, 
i.e. the 
polynomial $\theta _{20}:=\sum _{j=0}^{20}0.73^{j(j+1)/2}x^j$. One checks numerically 
(say, using MAPLE) that $\theta _{20}$ has zeros 
$0.03356612894\ldots \pm 2.885381139\ldots i$. These are the only zeros of 
$\theta _{20}$ in the closure of $\tilde{D}$. 
Numerical check shows that the modulus of the restriction of $\theta _{20}$ 
to the border of $\tilde{D}$ is everywhere larger than $0.016$. On the other 
hand the  
sum $\sum _{j=21}^{\infty}|0.73^{j(j+1)/2}x^j|$ is 
$\leq \sum _{j=21}^{\infty}0.73^{j(j+1)/2}3^j<3\times 10^{-22}$. By the Rouch\'e 
theorem the functions $\theta (0.73,.)$ and $\theta _{20}$ have 
one and the same number of zeros inside the half-disk $\tilde{D}$.  
\end{proof}

\section{Proof of Theorem~\protect\ref{TM}
\protect\label{secprooftm}}

As for $q\in (0,\tilde{q}_1]$ all zeros of $\theta (q,.)$ are negative (see 
Remarks~\ref{remsasympt}), we prove Theorem~\ref{TM} only for 
$q\in (\tilde{q}_1,1)$.

\subsection{The Jacobi theta function}

In the proof of Theorem~\ref{TM} we use the Jacobi theta function 
$\Theta (q,z):=\sum _{j=-\infty}^{\infty}q^{j^2}z^j$. By the 
{\em Jacobi triple product} 
one has 

$$\Theta (q,z^2)=\prod _{m=1}^{\infty}(1-q^{2m})(1+z^2q^{2m-1})
(1+z^{-2}q^{2m-1})$$
from which for the function 
$\Theta ^*(q,z):=\Theta (q^{1/2},q^{1/2}z)=
\sum _{j=-\infty}^{\infty}q^{j(j+1)/2}z^j$ one deduces the formula

\begin{equation}\label{eqTheta}
\Theta ^*(q,z)=\prod _{m=1}^{\infty}(1-q^m)(1+zq^m)(1+q^{m-1}/z)~.
\end{equation}

\begin{nota}\label{NOTA}
{\rm We set 

$$\begin{array}{lclclc}
s_m:=1+q^{m-1}/z&,&t_m:=1+zq^m&,&Q:=\prod _{m=1}^{\infty}(1-q^m)&,\\ \\  
P:=\prod _{m=1}^{\infty}t_m&&{\rm and}&& 
R:=\prod _{m=1}^{\infty}s_m&.\end{array}$$ 
Thus $\Theta ^*=QPR$.}
\end{nota}

\subsection{Proof of part (1)}

We begin with the observation that 
for any factor $s_m$ (see (\ref{eqTheta}) and Notation~\ref{NOTA}) 
one has 

$$s_m=1+\bar{z}q^{m-1}/|z|^2~~~\, \, {\rm hence}~~~\, \, 
|s_m|\geq {\rm Re}\, s_m\geq 1~~~\, \, {\rm for}~~~\, \, {\rm Re}\, z\geq 0~.$$
Clearly, for any factor $t_m$ it is true that $|t_m|\geq$~Re\,$t_m\geq 1$ 
and $|t_m|\geq |zq^m|$ for Re~$z\geq 0$.

Further in the proof of 
Theorem~\ref{TM} we subdivide the interval $(0,1)$ to which $q$ belongs into 
intervals of the form

\begin{equation}\label{eqqn}
\begin{array}{lccccc}
q\in (1-1/(n-1)~,~1-1/n]&,&n\in \mathbb{N}&,&n\geq 3&{\rm and}\\ \\ 
q\in (\tilde{q}_1~,~1/2]&.&&&&\end{array}
\end{equation} 

\begin{nota}\label{notathetaG}
{\rm We set $\theta :=\Theta ^*-G$, where $G:=\sum _{j=-\infty}^{-1}q^{j(j+1)/2}z^j$, 
and $u:=2e^{(\pi ^2/6)}=10.36133664\ldots$.}
\end{nota}

\begin{rem}\label{remthetaG} 
{\rm Clearly, for $|z|>1$ one has $|G|\leq \sum _{j=1}^{\infty}1/|z|^j=1/(|z|-1)$. 
In particular, for $|z|\geq 18$ (resp. for $|z|\geq u$) one has 
$|G|\leq 1/17$ (resp. $|G|\leq 1/(u-1)$).}
\end{rem} 

Suppose first that $q\in (1/2,1)$.
We show that for $|z|\geq u$, Re\, $z\geq 0$ one has 
$|\Theta ^*|>|G|$ from which part (1) of the theorem  
follows. 

\begin{lm}\label{lmQ}
For $q\leq 1-1/n$, $n\in \mathbb{N}$, $n\geq 2$, one has 
$Q\geq e^{(\pi ^2/6)(1-n)}$.
\end{lm}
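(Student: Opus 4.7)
The plan is to take logarithms and expand each factor by the Taylor series $-\log(1-x)=\sum_{k=1}^{\infty}x^k/k$. Writing $L:=-\log Q=-\sum_{m=1}^{\infty}\log(1-q^m)$ and noting that all terms are nonnegative, Tonelli's theorem lets me swap the order of summation:
$$L=\sum_{m=1}^{\infty}\sum_{k=1}^{\infty}\frac{q^{mk}}{k}=\sum_{k=1}^{\infty}\frac{1}{k}\sum_{m=1}^{\infty}q^{mk}=\sum_{k=1}^{\infty}\frac{q^k}{k(1-q^k)}\, .$$

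Next I would bound each denominator from below using the factorization $1-q^k=(1-q)(1+q+\cdots +q^{k-1})$. Since $q\in(0,1)$, each term in the geometric sum satisfies $q^i\geq q^{k-1}$ for $0\leq i\leq k-1$, so $1+q+\cdots +q^{k-1}\geq kq^{k-1}$, giving $1-q^k\geq k(1-q)q^{k-1}$. Consequently $\frac{q^k}{1-q^k}\leq \frac{q}{k(1-q)}$, and substituting into the series above yields
$$L\leq \frac{q}{1-q}\sum_{k=1}^{\infty}\frac{1}{k^2}=\frac{q}{1-q}\cdot \frac{\pi^2}{6}\, .$$
The hypothesis $q\leq 1-1/n$ gives $1-q\geq 1/n$, whence $\frac{q}{1-q}=\frac{1}{1-q}-1\leq n-1$. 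Combining these bounds yields $-\log Q\leq (n-1)\pi^2/6$, and exponentiating produces the claim.

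The one spot that requires care is the choice of the lower bound for $1-q^k$: a cruder estimate such as $1-q^k\geq 1-q$ would leave $\sum 1/k$, which diverges, whereas the refined inequality $1-q^k\geq k(1-q)q^{k-1}$ manufactures the extra factor of $k$ needed to produce the convergent sum $\sum 1/k^2=\pi^2/6$ and hence exactly the exponent $\pi^2/6$ appearing in the statement.
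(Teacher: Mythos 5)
Your proof is correct. Note that the paper itself gives no argument for this lemma: it simply cites Lemma~4 of \cite{Ko3}, so there is no in-text proof to compare against. Your derivation is the standard (and almost certainly the intended) one: writing $-\ln Q=\sum_{k\geq 1}\frac{q^k}{k(1-q^k)}$ via the Taylor expansion of $-\ln(1-x)$ and an interchange of summation, then using $1-q^k\geq k(1-q)q^{k-1}$ to extract the extra factor of $k$ that turns the sum into $\frac{q}{1-q}\sum_k k^{-2}=\frac{q}{1-q}\cdot\frac{\pi^2}{6}$, and finally $\frac{q}{1-q}\leq n-1$ from $q\leq 1-1/n$. Every step checks out, and your closing remark correctly identifies the one place where a cruder bound would fail. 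In effect you have supplied a self-contained proof of the statement the paper outsources to an external reference, which is a useful addition rather than a deviation.
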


The lemma is a particular case of Lemma~4 in \cite{Ko3}. 

Consider the product 
$P_0:=\prod _{m=1}^nt_m$. It follows from 
$|t_m|\geq |zq^m|$ that $|P_0|\geq |z|^nq^{n(n+1)/2}$. The first line of 
conditions (\ref{eqqn}) implies 

\begin{equation}\label{eqP0}
\begin{array}{cclcl}
|P_0|&\geq&|z|^n(1-1/(n-1))^{n(n+1)/2}&=&|z|^n(1-1/(n-1))^{(n-1)(n+2)/2+1}\\ \\ 
&\geq&|z|^n4^{-(n+2)/2}(1-1/(n-1))&\geq&|z|^n4^{-(n+3)/2}~~~=~~~|z|^n2^{-(n+3)}~;
\end{array}
\end{equation} 
we use the 
inequalities 
\begin{equation}\label{eq4}
(1-1/(n-1))^{n-1}\geq 1/4
\end{equation} 
and $1-1/(n-1)\geq 1/2$ which hold true for $n\geq 3$.  

Set $P_1:=\prod _{m=n+1}^{\infty}t_m$.  
Hence we have $|P_1|\geq 1$, $|R|\geq 1$ and 

$$\begin{array}{cclcl}
|\Theta ^*|&=&Q|P_0||P_1||R|&\geq&e^{(\pi ^2/6)(1-n)}|z|^n2^{-(n+3)}\\ \\ 
&=&(e^{(\pi ^2/6)}/2^3)(|z|/2e^{(\pi ^2/6)})^n&.&\end{array}$$
Obviously, for $|z|\geq u$ 
one has $|z|/2e^{(\pi ^2/6)}\geq 1$. As $e^{(\pi ^2/6)}/2^3=0.64\ldots >1(u-1)$, one 
obtains the inequalities $|\Theta ^*|>1/(u-1)\geq |G|$ 
which proves part~(1) of the theorem 
for $q\in (1/2,1)$ (because $u<18$). 

Suppose that $q\in (\tilde{q}_1,1/2]$. 
In this case for $|z|\geq 18$ and 
Re\,$z\geq 0$ 
one has $|t_1|\geq 18\tilde{q}_1$, $|t_m|\geq 1$, $|s_m|\geq 1$ for 
$m\in \mathbb{N}$ and (by Lemma~\ref{lmQ} with $n=2$)  
$Q\geq e^{-\pi ^2/6}$, so 
$|\Theta ^*|\geq e^{-\pi ^2/6}18\tilde{q}_1>1>1/(|z|-1)\geq |G|$.

\subsection{Proof of part (2)}

The proof of part (2) is also based on formula (\ref{eqTheta}). We aim to show 
that for Re$z<0$ and $|$Im\,$z|\geq 132$ one has $|\Theta ^*|>|G|$. 
The following 
technical result is necessary for the estimations and for the understanding of 
Figure~\ref{figab}:

\begin{lm}\label{lmtechnical}
For $x\in [0,0.683]$ one has $\ln (1-x)\geq -x-x^2$ 
with equality only for $x=0$.
\end{lm}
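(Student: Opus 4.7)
The plan is to introduce the auxiliary function $f(x):=\ln(1-x)+x+x^2$ and show that $f(x)\geq 0$ on $[0,0.683]$ with equality only at $x=0$. Since $f(0)=0$ the content of the statement lies entirely in where $f$ is monotone. A direct computation gives
$$f'(x)=-\frac{1}{1-x}+1+2x=\frac{x(1-2x)}{1-x},$$
which is positive on $(0,1/2)$ and negative on $(1/2,1)$. Thus $f$ is strictly increasing on $[0,1/2]$, so $f(x)>0$ on $(0,1/2]$, settling the left half of the interval right away.

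On $[1/2,0.683]$ the function $f$ is strictly decreasing, so the inequality on this subinterval reduces to checking a single value at the right endpoint. Using $\ln(0.317)\approx -1.14885$ and $0.683+0.683^2=1.149489$, one finds $f(0.683)\approx 6\times 10^{-4}>0$. Combined with the decrease of $f$, this yields $f(x)>0$ on $[1/2,0.683]$ and therefore the claimed inequality on the whole interval, strict off $x=0$.

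The one place where care is needed is the numerical verification at the right endpoint, because the margin is small: the unique zero of $f$ in $(1/2,1)$ lies just above $0.683$, near $0.684$, which is presumably the reason the constant $0.683$ appears in the statement. To avoid relying on a calculator one can either evaluate $\ln(0.317)$ from a short Taylor expansion of $\ln(1-x)$ centred at a convenient rational point with an explicit remainder bound, or use the series identity
$$f(x)=\frac{x^2}{2}-\sum_{k=3}^{\infty}\frac{x^k}{k},$$
and estimate the tail by the geometric sum $\sum_{k\geq 3}x^k/3 = x^3/(3(1-x))$, which at $x=0.683$ is still smaller than $x^2/2$. Either route turns the ``main obstacle'' into a purely quantitative estimate rather than a conceptual one.
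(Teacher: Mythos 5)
Your main argument is correct and is essentially the paper's own proof: the paper introduces the same auxiliary function $\zeta(x)=\ln(1-x)+x+x^2$, computes the same factorization $\zeta'(x)=x(1-2x)/(1-x)$, deduces increase on $[0,1/2]$ and decrease on $(1/2,1)$, and then settles the right half of the interval by a numerical check (the paper verifies that the unique zero of $\zeta$ in $[1/2,1)$ exceeds $0.683$, which is equivalent to your check that $f(0.683)>0$). Both versions ultimately rest on an accurate numerical evaluation near $x=0.683$, where the margin is only about $6\times 10^{-4}$.

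One concrete error in your closing remark: the proposed calculator-free route via
$$\sum_{k\geq 3}\frac{x^k}{k}\leq \frac{x^3}{3(1-x)}$$
does \emph{not} work at $x=0.683$. There the right-hand side equals $0.683^3/(3\cdot 0.317)\approx 0.335$, while $x^2/2\approx 0.233$, so the bound fails to show $f(0.683)>0$ — indeed, given how small the true margin is, any tail estimate this crude must fail. If you want a computation-light verification you need something sharper, e.g.\ evaluating $\ln(0.317)$ by a Taylor expansion about a convenient point with an explicit remainder, or bounding the tail by $\sum_{k\geq 3}x^k/k$ with many exact initial terms before invoking a geometric bound. This does not affect the validity of your main proof, which (like the paper's) simply accepts the numerical evaluation.
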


\begin{proof}
We set 
$\zeta (x):=\ln (1-x)+x+x^2$, so $\zeta (0)=0$. 
As $\zeta '=-1/(1-x)+1+2x=x(1-2x)/(1-x)$ which is 
nonnegative on $[0,1/2]$ and positive on $(0,1/2)$, one has  
$\zeta (x)>0$ for $x\in (0,1/2]$. On $(1/2,1)$ one has $\zeta '<0$, so $\zeta$ 
is decreasing. As $\lim _{x\rightarrow 1^-}\zeta =-\infty$, $\zeta$ has a single 
zero on $[1/2,1)$. Numerical computation shows that this zero is 
$>0.683$ which proves the lemma. 
\end{proof}

To estimate the factor $R$ we use the following lemma:

\begin{lm}\label{lmestimR}
For $q\in (1-1/(n-1),1-1/n]$, $n\geq 2$, and 
$|${\rm Im\, }$z|\geq b\geq 1.5$ one has 
$|R|\geq e^{-n(b+1)/b^2}$.
\end{lm}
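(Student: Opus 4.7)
The plan is to estimate $\ln|R| = \sum_{m=1}^{\infty} \ln|s_m|$ from below, where $s_m = 1 + q^{m-1}/z$. Since $|z| \geq |{\rm Im}\, z| \geq b \geq 1.5$, the quantity $\rho_m := q^{m-1}/|z|$ satisfies $\rho_m \leq 1/b \leq 2/3 < 1$, so the reverse triangle inequality gives $|s_m| \geq 1 - \rho_m > 0$, and therefore $\ln|s_m| \geq \ln(1-\rho_m)$. Note that no hypothesis on ${\rm Re}\, z$ is used, which is consistent with how the lemma is stated.

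Next I would apply Lemma~\ref{lmtechnical} termwise: since $\rho_m \leq 2/3 < 0.683$, we obtain $\ln(1-\rho_m) \geq -\rho_m - \rho_m^2$. Summing over $m$ and evaluating the resulting geometric series gives
$$\ln|R| \geq -\sum_{m=1}^{\infty}(\rho_m + \rho_m^2) = -\frac{1}{|z|(1-q)} - \frac{1}{|z|^2(1-q^2)}.$$
Using $|z| \geq b$ together with $1-q^2 = (1-q)(1+q) \geq 1-q$, the right-hand side is bounded below by $-(b+1)/(b^2(1-q))$. The hypothesis $q \leq 1 - 1/n$ then gives $1/(1-q) \leq n$, so $\ln|R| \geq -n(b+1)/b^2$, and exponentiation yields the claim.

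The argument is essentially routine geometric-series bookkeeping; the one point requiring care is verifying that every $\rho_m$ lies in the admissible range $[0,0.683]$ of Lemma~\ref{lmtechnical}. This is precisely why the hypothesis $b \geq 1.5$ appears: one needs $1/b \leq 0.683$, and $1/1.5 = 0.\overline{6}$ comfortably satisfies this. I do not anticipate any further obstacle.
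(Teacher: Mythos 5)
Your proposal is correct and follows essentially the same route as the paper: bound $|s_m|\geq 1-q^{m-1}/|z|$, apply Lemma~\ref{lmtechnical} termwise (justified by $1/|z|\leq 1/b\leq 2/3<0.683$), sum the two geometric series, and use $1-q^2\geq 1-q$ and $q\leq 1-1/n$ to reach $-n(b+1)/b^2$. No discrepancies to report.
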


\begin{proof}
Indeed, the condition $b\geq 1.5$ implies $1/|z|<0.683$, so one can apply 
Lemma~\ref{lmtechnical}: 

$$\begin{array}{ccccl}
\ln |R|&=&
\sum _{m=1}^{\infty}\ln |s_m|&\geq&
\sum _{m=1}^{\infty}\ln (1-q^{m-1}/|z|)\\ \\ &\geq& 
-\sum _{m=1}^{\infty}(q^{m-1}/|z|+q^{2m-2}/|z|^2)&=&
-1/(1-q)|z|-1/(1-q^2)|z|^2\\ \\ 
&>&
-(|z|+1)/(1-q)|z|^2&&\end{array}$$
which for $q\leq 1-1/n$ is $\geq -n(|z|+1)/|z|^2\geq -n(b+1)/b^2$. 
\end{proof}

\begin{figure}[htbp]
\centerline{\hbox{\includegraphics[scale=0.7]{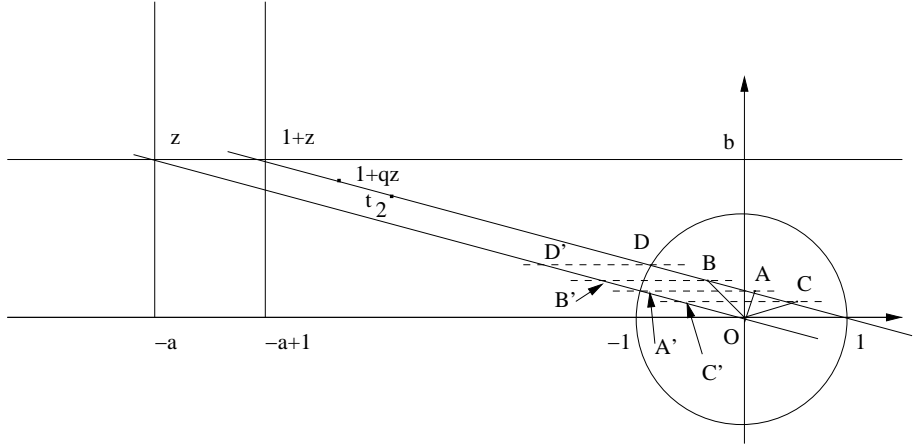}}}
    \caption{The points $1+q^mz$.}
\label{figab}
\end{figure}

We identify the complex numbers and the points in $\mathbb{R}^2$ representing 
them. On Fig.~\ref{figab} we represent the points 

$$\begin{array}{lcll}z=-a+bi~~~\, (a>0)&,& 
t_0:=1+z=1-a+bi&,\\ \\   
t_1:=1+qz=1-qa+qbi&~~~{\rm and}~~~&t_2:=1+q^2z=1-q^2a+q^2bi&.\end{array}$$ 
The last three of them are 
situated on the straight line $\mathcal{L}$ 
passing through $1+z$ and $(1,0)$. 

In what follows we assume that $b\geq 0$. The set of zeros of $\theta$ being 
symmetric w.r.t. the real axis this leads to no loss of generality.

The point $A\in \mathcal{L}$ is such that 
the segment $OA$ is orthogonal to $\mathcal{L}$. An easy computation shows that 
$A=(b^2/(a^2+b^2),ab/(a^2+b^2))$ (one has to use the fact that the vector 
$(-a,b)$ is collinear with $\mathcal{L}$). The points $B$ and $C$ belong to 
$\mathcal{L}$. The unit circumference intersects the line $\mathcal{L}$ 
at $(1,0)$ and $D$. Denote by $\tilde{\Delta}$ the length 
$\| [A,(1,0)]\|$ of the segment 
$[A,(1,0)]$. The points $B$ and $C$ are defined such that  

\begin{equation}\label{eqsegments}
\| [A,B]\| =\| [A,C]\| =0.317\, \tilde{\Delta}~~~\, {\rm and}\, ~~~
\| [C,(1,0)]\| =\| [D,B]\| =0.683\, \tilde{\Delta}~.
\end{equation}  
The line $\mathcal{L}'$ is parallel to $\mathcal{L}$. It passes through 
the points $z$ and $O:=(0,0)$. The points $B'$, $A'$, $C'$ and $D'$ belong to 
$\mathcal{L}'$. The lines $BB'$, $AA'$, $CC'$ and $DD'$ are parallel 
to the $x$-axis. Hence the segments $[D,B]$, $[C,(1,0)]$, $[D',B']$ 
and $[C',O]$ 
are of length $0.683\, \tilde{\Delta}$ while $[B,A]$, 
$[A,C]$, $[B',A']$ and 
$[A',C']$ are of length $0.317\, \tilde{\Delta}$. 

Our aim is to estimate the product 
$|P|:=\prod _{m=1}^{\infty}|t_m|=\prod _{m=1}^{\infty}|1+q^mz|$. 

\begin{nota}\label{NOTA1}
{\rm (1) We set}

\begin{equation}\label{eqPPPP}
|P|:=\tilde{P}P^{\ddagger}P^{\sharp}P^{\dagger}~,
\end{equation}
{\rm where $\tilde{P}$, $P^{\ddagger}$, $P^{\sharp}$ and $P^{\dagger}$ 
are the products of the moduli 
$|t_m|$ for which the point 
$t_m$ belongs to the segment $[1+z,D]$, $[D,B]$, $[B,C]$ and 
$[C,(1,0)]$ respectively.

(2) We denote by $t_{m_0}$, $t_{m_0+1}$, $\ldots$ the points $t_m$ 
belonging to 
the segment $[C,(1,0)]$ and we set $c_{m_0}:=C$, $c_{m_0+k}:=1+(C-1)q^k$. Hence 
$|t_{m_0+k}|\geq |c_{m_0+k}|$ with equality only if $t_{m_0}=C$.}
\end{nota}

\begin{lm}\label{lmPddagger}
For $q\in (1-1/(n-1),1-1/n]$, $n\geq 2$, one has 
$P^{\dagger}\geq e^{-1.149489n}$ and $P^{\ddagger}\geq e^{-1.149489n}$ (where 
$1.149489=0.683+0.683^2$).
\end{lm}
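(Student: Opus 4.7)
\medskip
\textbf{Proposal.} The plan is to bound the factors in $P^{\dagger}$ and $P^{\ddagger}$ via Lemma~\ref{lmtechnical}, but by somewhat different routes, since a natural telescoping argument works for $P^{\dagger}$ only.

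For $P^{\dagger}$, I would start from the majorization $|t_{m_0+k}| \geq |c_{m_0+k}|$ of Notation~\ref{NOTA1}(2). The triangle inequality together with $|C-1|=0.683\tilde{\Delta} \leq 0.683$ (using $\tilde{\Delta}\leq 1$) gives $|c_{m_0+k}| \geq 1 - 0.683\, q^k$. Since $0.683\, q^k \in [0,0.683]$, Lemma~\ref{lmtechnical} applies and yields $\ln|c_{m_0+k}| \geq -0.683\, q^k - 0.683^2\, q^{2k}$. Summing the two geometric series over $k\geq 0$ and using $1/(1-q^2) \leq 1/(1-q)$, one obtains $\ln P^{\dagger} \geq -(0.683+0.683^2)/(1-q) = -1.149489/(1-q)$, and the bound $1/(1-q) \leq n$ (from $q\leq 1-1/n$) finishes this case.

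For $P^{\ddagger}$, the analogous scaling argument fails: the points $t_m \in [D, B]$ lie on the side of $A$ opposite to $(1,0)$, where $\alpha \mapsto |1+\alpha z|$ is increasing in $\alpha=q^m$, so an inequality of the type $|t_{m_1+k}| \geq |1+q^k(D-1)|$ goes the wrong way. Instead I would use a uniform per-factor estimate. Since $|D|=1$, the triangle inequality gives $|t_m| \geq 1 - h_m$ with $h_m := |t_m - D| \leq \|[D,B]\| = 0.683\tilde{\Delta} \leq 0.683$, and Lemma~\ref{lmtechnical} yields $\ln|t_m| \geq -h_m - h_m^2 \geq -(0.683+0.683^2) = -1.149489$.

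It remains to show that the number $N$ of indices $m$ with $t_m \in [D, B]$ is at most $n$. The condition $t_m \in [D,B]$ amounts to $q^m|z| \in [1.317\tilde{\Delta}, 2\tilde{\Delta}]$, so the $q^m$ lie in an interval whose endpoint-ratio is $2/1.317$; hence $N \leq \log(2/1.317)/(-\log q) + 1 \leq 0.418\, n + 1$, where I used $-\log q \geq -\log(1-1/n) \geq 1/n$. A short numerical check shows $0.418\, n + 1 \leq n$ already at $n = 2$, so $N \leq n$ for all $n\geq 2$. Multiplying the per-factor bounds then yields $\ln P^{\ddagger} \geq -1.149489\, N \geq -1.149489\, n$, as required. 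The main obstacle is spotting that no exact telescoping analog of the $c_{m_0+k}$-construction is available for $P^{\ddagger}$ and that the crude uniform per-factor bound is sharp enough only because the number of $t_m$ inside $[D, B]$ admits a matching logarithmic count.
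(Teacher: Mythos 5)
Your proof is correct, and while the $P^{\dagger}$ half coincides with the paper's argument (the bound $|c_{m_0+k}|\geq 1-0.683\,q^k$, Lemma~\ref{lmtechnical}, and the two geometric series summing to $1.149489/(1-q)\geq -1.149489\,n$ in the exponent), your treatment of $P^{\ddagger}$ is genuinely different. The paper does \emph{not} abandon the comparison with the points $c_{m_0+k}$: it orders the points $U_1,\ldots,U_r$ of $[B,D]$ starting from $B$ and uses that consecutive gaps $\|[t_m,t_{m+1}]\|=(1-q)q^m|z|$ on $[B,D]$ exceed those on the equally long segment $[C,(1,0)]$, so that $U_j$ is at least as far from $A$ as $c_{m_0+j-1}$ is, whence $|U_j|\geq |c_{m_0+j-1}|$ and $P^{\ddagger}\geq |c_{m_0}||c_{m_0+1}|\cdots\geq e^{-1.149489n}$ by the bound already proved for $P^{\dagger}$ --- no counting of factors is needed. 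You instead use the uniform per-factor bound $|t_m|\geq 1-\|[D,t_m]\|\geq 1-0.683\tilde{\Delta}$, giving $\ln |t_m|\geq -1.149489$ for each factor, and then must count the factors: your observation that $t_m\in[D,B]$ forces $q^m|z|\in[1.317\tilde{\Delta},2\tilde{\Delta}]$, so that there are at most $\ln(2/1.317)/\ln(1/q)+1\leq 0.418\,n+1\leq n$ of them for $n\geq 2$, is correct and closes the argument (indeed it yields the slightly stronger bound $e^{-1.149489(0.418n+1)}$). The trade-off: the paper's matching argument is count-free but requires seeing the monotonicity of the gaps; your route is more mechanical but hinges on the numerical accident that the logarithmic count $0.418\,n+1$ stays below $n$ for all $n\geq 2$. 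Your side remark that a direct telescoping $|t_{m_1+k}|\geq |1+q^k(D-1)|$ would go the wrong way is accurate, but it does not mean the $c_{m_0+k}$-comparison is unavailable --- the paper's distance-from-$A$ formulation salvages it.
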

 
\begin{proof}
We notice first that the segment $[C,(1,0)]$ is 
of length $<0.683$. Hence $|t_{m_0+k}|\geq |c_{m_0+k}|\geq (1-0.683q^k)$, $k=0$, 
$1$, $\ldots$, so we can use Lemma~\ref{lmtechnical} to get

$$\begin{array}{cclcl}
\ln P^{\dagger}&\geq&\ln (|c_{m_0}||c_{m_0+1}|\cdots )&\geq&
\ln(\prod _{m=1}^{\infty}(1-0.683q^{m-1}))\\ \\ 
&\geq&-\sum _{m=1}^{\infty}(0.683q^{m-1}+0.683^2q^{2m-2})&&\\ \\ 
&=&-0.683/(1-q)-0.683^2/(1-q^2)
&>&-1.149489/(1-q)~.\end{array}$$  
Thus $P^{\dagger}\geq e^{-1.149489n}$. 
Next, the distance between any two consecutive points $1+q^mz$ and 
$1+q^{m+1}z$ belonging to $[B,D]$ is greater than the distance between any two 
such points belonging to $[C,(1,0)]$. Denote by $U_1$, $U_2$, $\ldots$, $U_r$ 
the points $t_m$ belonging to the segment $[B,D]$, where $U_1$ 
(resp. $U_r$) is closest to 
$B$ (resp. to $D$). Then $|U_1|\geq |c_{m_0}|$, $|U_2|\geq |c_{m_0+1}|$, 
$\ldots$, $|U_r|\geq |c_{m_0+r-1}|$. As $|c_m|\leq |t_m|<1$ for 
$m\geq m_0$, one has 
$P^{\ddagger}\geq |c_{m_0}||c_{m_0+1}|\cdots |c_{m_0+r-1}|>
|c_{m_0}||c_{m_0+1}|\cdots \geq e^{-1.149489n}$.
\end{proof}

\begin{lm}\label{lmPsharp}
There are $\leq \mu _1$ factors $|t_m|$ 
in $P^{\sharp}$, where 
$$\begin{array}{cclc}
\mu _1&:=&\ln \lambda _1/\ln (1/q)+1&{\rm and}\\ \\ 
\lambda _1&:=&(0.634+0.683)/0.683=1.928\ldots&\end{array}$$ 
with $\ln \lambda _1=0.6566\ldots$. Hence   
$$P^{\sharp}\geq (b^2/(a^2+b^2))^{\mu _1/2}=(1/(\beta ^2+1))^{\mu _1/2}~~~,~~~ 
\beta :=a/b~.$$
\end{lm}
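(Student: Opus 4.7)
The plan is to exploit the fact that the points $t_m = 1 + q^m z$ all lie on the line $\mathcal{L}$, with $|t_m - 1| = q^m |z|$ giving a geometric sequence of distances along $\mathcal{L}$ from the anchor point $(1,0)$. The counting claim will follow from this geometric regularity, while the lower bound on each factor will come from projecting onto the foot of perpendicular $A$.

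For the counting step, I would observe that $t_m$ belongs to $[B,C]$ if and only if its distance $q^m |z|$ from $(1,0)$ lies between $\|C-(1,0)\| = 0.683\,\tilde{\Delta}$ and $\|B-(1,0)\| = 1.317\,\tilde{\Delta}$ (since $B$ lies on the opposite side of $A$ from $(1,0)$, at distance $\tilde{\Delta} + 0.317\,\tilde{\Delta}$). Hence the admissible $m$ satisfy $q^m \in [0.683\,\tilde{\Delta}/|z|,\, 1.317\,\tilde{\Delta}/|z|]$, an interval whose endpoints have ratio $\lambda_1 = (0.634 + 0.683)/0.683 = 1.317/0.683$. Because $q^m$ is geometric with ratio $q<1$, the number of integers $m$ for which $q^m$ falls into any interval of multiplicative width $\lambda_1$ is at most $\log_q \lambda_1 + 1 = \ln \lambda_1/\ln(1/q) + 1 = \mu_1$.

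For the bound on each factor, the key observation is that $A$ is the foot of the perpendicular from $O$ onto $\mathcal{L}$, so $|t_m| \geq |OA|$ for every $t_m$ on $\mathcal{L}$. From the given coordinates of $A$ one computes
\[
|OA|^2 = \frac{b^4}{(a^2+b^2)^2} + \frac{a^2 b^2}{(a^2+b^2)^2} = \frac{b^2}{a^2+b^2},
\]
so $|t_m| \geq b/\sqrt{a^2+b^2} \leq 1$. If $P^{\sharp}$ contains $k \leq \mu_1$ factors, then $P^{\sharp} \geq (b/\sqrt{a^2+b^2})^k \geq (b/\sqrt{a^2+b^2})^{\mu_1} = (b^2/(a^2+b^2))^{\mu_1/2}$, the last inequality using $b/\sqrt{a^2+b^2} \leq 1$. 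Writing $\beta = a/b$ converts this to $(1/(\beta^2+1))^{\mu_1/2}$, as claimed. The argument is essentially geometric bookkeeping; the only mildly delicate point is checking that the additive constant $+1$ in the definition of $\mu_1$ is enough, which is immediate from the fact that an interval of multiplicative width $\lambda_1$ contains at most $\lfloor \ln\lambda_1/\ln(1/q)\rfloor + 1$ consecutive terms of a geometric progression with ratio $q$.
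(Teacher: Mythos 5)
Your proof is correct and follows essentially the same route as the paper's: the paper measures the geometric progression $|zq^m|$ from $O$ along the translated line $\mathcal{L}'$ rather than measuring $|t_m-1|=q^m|z|$ from $(1,0)$ along $\mathcal{L}$, but this is the same counting argument (a multiplicative interval of ratio $\lambda_1$ contains at most $\ln\lambda_1/\ln(1/q)+1$ terms of the progression), and both proofs bound each factor from below by $\|[O,A]\|=b/(a^2+b^2)^{1/2}$ using that $A$ is the foot of the perpendicular from $O$ to $\mathcal{L}$.
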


\begin{proof}
Consider the points $C'$, $A'$ and $B'$ and the numbers $zq^{m_1}\in [B',A']$ 
and $zq^{m_2}\in [A',C']$ closest to $B'$ and $C'$ respectively. The lengths 
of 
the segments $[B',A']$, $[A',C']$ and $[C',O]$ (see (\ref{eqsegments})) 
imply $|zq^{m_1}|/|zq^{m_2}|\leq \lambda _1$, i.e. 
$m_2-m_1\leq \lambda _1/\ln (1/q)$. The number of factors $|t_m|$ 
in $P^{\sharp}$ equals $m_2-m_1+1$ from which one deduces the first 
claim of the lemma. 
All factors $|t_m|$ in $P^{\sharp}$ are $<1$ and  
$\geq \| [O,A]\| =b/(a^2+b^2)^{1/2}$ from which the second claim of the lemma 
follows. 
\end{proof}

\begin{rem}\label{rrem}
{\rm When $q\in (1/2,1)$ and (\ref{eqqn}) holds true, 
then $n/(n-1)\leq 1/q<(n-1)/(n-2)$. As 
for $x\in (0,1)$ one has $x-x^2/2<\ln (1+x)<x$ (by the Leibniz criterium for 
alternating series), one obtains the inequalities}  

\begin{equation}\label{mu10}
\begin{array}{ll}
(2n-3)/2(n-1)^2~=~1/(n-1)-1/2(n-1)^2~<~\ln (1/q)<1/(n-2)&{\rm hence}\\ \\ 
(\ln \lambda _1)(n-2)+1~<~\mu _1~\leq ~
\mu _1^0~:=~(\ln \lambda _1)(2(n-1)^2/(2n-3))+1&.\end{array}
\end{equation}
\end{rem}

\begin{lm}\label{lmL}
For $q\in (\tilde{q}_1,1)$ and $b\geq \max (a,132)$ one has 
$|\Theta ^*|>|G|$.

\end{lm}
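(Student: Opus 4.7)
The plan is to use the factorization $\Theta^* = QPR$ from the Jacobi triple product and bound $|\Theta^*|$ from below by a quantity exceeding $|G|\leq 1/(|z|-1)\leq 1/(b-1)\leq 1/131$ (Remark~\ref{remthetaG}). Four of the five factors have already been estimated: Lemma~\ref{lmQ} gives $Q\geq e^{-(\pi^2/6)(n-1)}$; Lemma~\ref{lmPddagger} gives $P^{\dagger}P^{\ddagger}\geq e^{-2\cdot 1.149489\,n}$; Lemma~\ref{lmPsharp}, combined with $\beta=a/b\leq 1$, yields $P^{\sharp}\geq 2^{-\mu_1/2}$; and Lemma~\ref{lmestimR}, applicable since $b\geq 132\geq 1.5$, gives $|R|\geq e^{-n(b+1)/b^2}$. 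The only missing ingredient is a sufficient lower bound on $\tilde P$.

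For $\tilde P$ I would exploit the simple bound $|t_m|\geq |\mathrm{Im}\,t_m|=q^m b$, valid for every factor of $\tilde P$ (those $t_m$ lying on $[1+z,D]$). Letting $m^\star$ denote the largest such index,
\[
\ln \tilde P \;\geq\; m^\star \ln b \,-\, \ln(1/q)\cdot \frac{m^\star(m^\star+1)}{2}.
\]
A short computation from $|t_m|^2=1-2aq^m+q^{2m}(a^2+b^2)$ reduces $|t_m|\geq 1$ to $q^m\geq 2a/(a^2+b^2)$, and since $2a/(a^2+b^2)\leq 1/b$ (equality iff $a=b$) one gets $m^\star\geq \lfloor \ln b/\ln(1/q)\rfloor$. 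Plugging the near-optimal $m^\star\approx \ln b/\ln(1/q)$ back into the display yields, up to lower-order terms, $\ln\tilde P\geq (\ln b)^2/(2\ln(1/q))$, which by $\ln(1/q)<\ln((n-1)/(n-2))$ on $q\in(1-1/(n-1),1-1/n]$ is at least roughly $(n-\frac{3}{2})(\ln b)^2/2$.

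Assembling the contributions, $\ln|\Theta^*|$ is bounded below by about $(n-\frac{3}{2})(\ln b)^2/2$ minus a linear-in-$n$ quantity whose coefficient is approximately $\pi^2/6+2\cdot 1.149489+(\ln\lambda_1)(\ln 2)/2+(b+1)/b^2\approx 4.2$. For $b\geq 132$ one has $(\ln b)^2/2\approx 11.9$, so the positive term dominates with a margin of about $7.7$ per unit of $n$, uniformly for $n\geq 3$, easily exceeding $\ln(b-1)\approx 4.88$ and forcing $|\Theta^*|>|G|$. The borderline case $q\in(\tilde q_1,1/2]$ ($n=2$) has to be handled directly, since Remark~\ref{rrem} is not available there; but $\ln(1/q)\geq \ln 2$ yields $m^\star\geq 7$, and even the single factor $|t_1|\geq \tilde q_1 b\geq 40$, multiplied against the other bounds, already gives $|\Theta^*|>|G|$ by a wide margin. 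The main obstacle is precisely this numerical bookkeeping: the value $132$ is calibrated so that the $\tilde P$-contribution outruns the aggregate negative $O(n)$-terms together with $\ln(b-1)$.
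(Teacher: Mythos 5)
Your overall strategy is the paper's: factor $\Theta^*=QPR$, split $|P|=\tilde PP^{\ddagger}P^{\sharp}P^{\dagger}$, invoke Lemmas~\ref{lmQ}, \ref{lmestimR}, \ref{lmPddagger}, \ref{lmPsharp} for everything except $\tilde P$, and beat $|G|\leq 1/(b-1)$. The one piece you supply yourself — the lower bound on $\tilde P$ — contains a genuine logical gap. You write $\ln\tilde P\geq m^\star\ln b-\ln(1/q)\,m^\star(m^\star+1)/2=:f(m^\star)$, prove $m^\star\geq\lfloor\ln b/\ln(1/q)\rfloor$, and then ``plug in'' that value. But $f$ is a concave quadratic in $m^\star$, maximal near $\ln b/\ln(1/q)$ and decreasing beyond it, so a \emph{lower} bound on $m^\star$ gives no lower bound on $f(m^\star)$. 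And $m^\star$ really can be much larger: $|t_m|\geq 1$ only requires $q^m\geq 2a/(a^2+b^2)$, which for small $a$ is far below $1/b$. For instance with $a=10^{-3}$, $b=132$ one gets $m^\star\approx 16/\ln(1/q)$ versus $\ln b/\ln(1/q)\approx 4.9/\ln(1/q)$, and then $f(m^\star)\approx -50n$, not the claimed $+(\ln b)^2/(2\ln(1/q))\approx +12n$. The inequality $\ln\tilde P\geq f(m^\star)$ is true but useless there. The repair is exactly the device the paper uses: truncate the product, i.e.\ keep only the factors $t_1,\dots,t_{m_1}$ with $m_1:=\lfloor\ln b/\ln(1/q)\rfloor$ (for which $bq^m\geq 1$), and observe that the discarded factors of $\tilde P$ still lie on $[1+z,D]$ and hence have modulus $\geq 1$; this gives $\ln\tilde P\geq f(m_1)\geq(\ln b)^2/(2\ln(1/q))-(\ln b)/2$, which is what you wanted. (The paper truncates even more drastically, at $m=n$, getting $|P_0|\geq b^n2^{-(n+3)}$, which already suffices.)

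Two smaller points. First, your final bookkeeping is too breezy: expanding $(n-2)(\ln b)^2/2$ produces a constant term $-(\ln b)^2\approx-23.8$ at $b=132$, which nearly cancels the positive contribution at $n=3$; the inequality does close (one finds roughly $(\ln b)^2/2-(\ln b)/2-11.2>-\ln(b-1)$ at $b=132$, with a margin of about $3$ in the exponent, increasing in $b$ and in $n$), but it is not the comfortable ``$7.7$ per unit of $n$'' picture you paint, and a referee would want the constants written out, as the paper does with its $K_1n+K_0$ versus $-\ln(b-1)$ comparison. Second, in the case $q\in(\tilde q_1,1/2]$ your ``$m^\star\geq 7$'' is backwards ($\ln(1/q)\geq\ln 2$ makes $\ln b/\ln(1/q)$ \emph{at most} about $7$), though your fallback — using just $|t_1|\geq 132\tilde q_1-1$ together with the other lemmas — does survive a numerical check; the paper instead uses three explicit factors of $\tilde P$ plus its Lemma~\ref{lmtwofactors} to get a large margin.
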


\begin{proof}
Prove first the lemma for $q\in (1/2,1)$, 
see the first line of (\ref{eqqn}). Set again $P_0:=\prod_{m=1}^nt_m$. Hence 

$$|t_m|\geq \, {\rm Im}\, t_m~~~\,  
{\rm and}~~~\, |P_0|\geq 
\prod _{m=1}^nbq^m=b^nq^{n(n+1)/2}\geq b^n2^{-(n+3)}~,$$ 
see (\ref{eqP0}) and (\ref{eq4}). 
With $Q$, $P^{\dagger}$, $P^{\ddagger}$ 
and 
$P^{\sharp}$ defined in Notations~\ref{NOTA} and \ref{NOTA1} one has 

\begin{equation}\label{eqQQQ}
|\Theta ^*|\geq Q|P_0|P^{\ddagger} 
P^{\sharp}P^{\dagger}R~.
\end{equation}
Indeed, if $b\geq 132$ and if $q$ satisfies the first line of 
conditions (\ref{eqqn}), then 

\begin{equation}\label{qm}
q^m\geq q^n\geq (1-1/(n-1))^{n-1}(1-1/(n-1))\geq (1/4)(1-1/(n-1))\geq 1/8
\end{equation} 
and $|bq^m|\geq |bq^n|\geq 132/8>1$. This means that all factors 
$|t_m|$ in $|P_0|$ are 
$>1$. Moreover,   
some factors $|t_m|$ with $|t_m|>1$ which are present in $|\Theta ^*|$ 
(i.e. in $\tilde{P}$, see (\ref{eqPPPP})) might 
be missing in the right-hand side of (\ref{eqQQQ}). 
Recall that each of the factors $P^{\dagger}$ and 
$P^{\ddagger}$ is minorized by $e^{-1.149489n}$ and that $|R|\geq e^{-(b+1)n/b^2}$, 
see Lemmas~\ref{lmPddagger} and 
\ref{lmestimR}. 
Recall also that by Lemma~\ref{lmPsharp}, 

$$
P^{\sharp}\geq (1/(\beta ^2+1))^{\mu _1/2}\geq 2^{-\mu _1/2}~~~
{\rm (because~~~}\beta =a/b\leq 1~{\rm )}$$
and that $\mu _1\leq \mu _1^0$, see (\ref{mu10}). 
Hence the right-hand side of (\ref{eqQQQ}) is 
$$\begin{array}{cccl}
\geq&H&:=&e^{(\pi ^2/6)(1-n)}b^n2^{-(n+3)}e^{2(-1.149489n)}\\ \\ 
&&&\times e^{-(\ln 2)((\ln \lambda _1)2(n-1)^2/(2n-3)+1)/2}e^{-(b+1)n/b^2}~.\end{array}$$
Taking into account that 

\begin{equation}\label{eq2n}
2(n-1)^2/(2n-3)=n-1/2+1/2(2n-3)~,
\end{equation} 
we represent the expression $H$ in the form $e^{K_1n+K_0}$, where 

$$\begin{array}{ccl}K_1&:=&-\pi ^2/6+
\ln b-\ln 2-2.298978-(\ln 2)(\ln \lambda _1)/2-(b+1)/b^2~~~,\\ \\ 
K_0&:=&\pi ^2/6-3\ln 2+(\ln 2)(\ln \lambda _1)/4-(\ln 2)/2-
(\ln 2)(\ln \lambda _1)/4(2n-3)~.\end{array}$$
Recall that $n\geq 3$, see the first line of (\ref{eqqn}). 
The sum $K_0$ is minimal for $n=3$. 
For $b\geq 132$ one has $K_1>0$ and $K_0|_{n=3}>0>-\ln (b-1)$ 
which implies the inequalities 

$$|\Theta ^*|\geq Q|P_0|P^{\ddagger}P^{\sharp}P^{\dagger}R>
1/(b-1)\geq 1/(|z|-1)\geq |G|~.$$

Prove the lemma for $q\in (\tilde{q}_1,1/2]$. One has $Q\geq e^{-\pi ^2/6}$ 
(Lemma~\ref{lmQ} with $n=2$), $R\geq e^{-2(b+1)/b^2}\geq e^{-2\times 133/132^2}$ 
(Lemma~\ref{lmestimR} 
with $n=2$) and $P^{\dagger}\geq e^{-2.298978}$ 
(Lemma~\ref{lmPddagger} 
with $n=2$). 

\begin{lm}\label{lmtwofactors}
For $q\in (\tilde{q}_1,1/2]$ the product $P^{\ddagger}P^{\sharp}$ 
contains at most two factors.
\end{lm}

\begin{proof} 
Indeed, consider the line $\mathcal{L}'$, see Fig.~\ref{figab}. One has 
$\| [O,C']\| =0.683\, \tilde{\Delta}$ and 
$\| [O,D']\| <4\times 0.683\, \tilde{\Delta}$, see (\ref{eqsegments}) and the 
lines that follow. 
Hence if the point $zq^m$ 
belongs to the segment $[D',C']$, then this is not the case of the point 
$zq^{m-2}$, because for $q\in (\tilde{q}_1,1/2]$ one has $q^{-2}\geq 4$ (but 
one could possibly have $zq^{m-1}\in [D',C']$). 
\end{proof}

All factors $|t_m|$ of the product 
$P^{\ddagger}P^{\sharp}$ belong to $[b/(a^2+b^2)^{1/2},1)$, therefore 
by Lemma~\ref{lmtwofactors}, 
$P^{\ddagger}P^{\sharp}\geq b^2/(a^2+b^2)$ which for $a\leq b$ is~$\geq 1/2$. 

On the other hand, the moduli of the first three factors $|t_m|$ 
in $\tilde{P}$ are not less than respectively 

$$132\, \tilde{q_1}-1>39.814~~~\, ,~~~\, 132\, \tilde{q_1}^2-1>11.619~~~\, 
{\rm and}~~~\, 
132\, \tilde{q_1}^3-1>2.902$$ 
and the moduli of all other factors $|t_m|$ in $\tilde{P}$ 
(if any) are $\geq 1$, so for $b\geq 132$

$$\begin{array}{ccl}
|\Theta ^*|&\geq&e^{-\pi ^2/6}\times (39.814\times 11.619\times 2.902)\times 
(1/2)\times e^{-2.298978}\times e^{-2\times 133/132^2}\\ \\ &>&12.8~~>~~|G|~.
\end{array}$$
\end{proof}

\begin{lm}\label{lmLL}
For $a\geq b\geq 132$ one has $|\Theta ^*|>|G|$.
\end{lm}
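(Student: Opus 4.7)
The plan is to adapt the proof of Lemma~\ref{lmL} to the regime $a \geq b \geq 132$, where $|z| \geq b\sqrt{2}$. I retain the Jacobi triple product decomposition $\Theta^* = QPR$ and the splitting $|P| = \tilde{P}P^{\ddagger}P^{\sharp}P^{\dagger}$, but sharpen the estimates of $|P_0|$ (in the $n \geq 3$ case) and of $\tilde{P}$ (in the $n = 2$ case) to exploit that $|z|$ is significantly larger than $b$. The main new difficulty is that Lemma~\ref{lmPsharp} now yields $P^{\sharp} \geq (b/|z|)^{\mu_1}$, which shrinks when $\beta := a/b \geq 1$, whereas in Lemma~\ref{lmL} (where $\beta \leq 1$) one could use the simpler bound $P^{\sharp} \geq 2^{-\mu_1/2}$; I need to recover this loss from elsewhere.

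I split by $q$ as in the proof of Lemma~\ref{lmL}. For $q \in (\tilde{q}_1, 1/2]$, Lemma~\ref{lmtwofactors} (whose proof is independent of the ratio $a/b$) still gives $P^{\ddagger}P^{\sharp} \geq (b/|z|)^2$. For the $\tilde{P}$-contribution I use only the single factor $|t_1|$: since $aq \geq 132\tilde{q}_1 > 40$, the bound $(aq-1)^2 \geq (aq)^2(1-1/40)^2$ combined with $|t_1|^2 = (1-aq)^2 + (bq)^2$ yields $|t_1|^2 \geq 0.95(aq)^2 + (bq)^2 \geq 0.95|z|^2 q^2$, so $|t_1| \geq 0.97\,q|z| \geq 0.30\,|z|$. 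Combined with $Q \geq e^{-\pi^2/6}$, $P^{\dagger} \geq e^{-2.298978}$ (Lemma~\ref{lmPddagger} at $n=2$) and $R \geq e^{-2(b+1)/b^2}$ (Lemma~\ref{lmestimR} at $n=2$), this gives $|\Theta^*| \geq C_b \cdot b^2/|z|$ for some $C_b > 1$ (numerically $C_b \approx 99$ at $b=132$), which easily exceeds $|G| \leq 1/(|z|-1)$.

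For $q \in (1/2, 1)$ with $q \in (1-1/(n-1), 1-1/n]$, $n \geq 3$, I apply the same refined bound $|t_m| \geq 0.94\,|z|q^m$ for $m \leq n$; this is valid because $aq^m \geq aq^n \geq 132/8 = 16.5$ (using $q^n \geq 1/8$ exactly as in Lemma~\ref{lmL}), hence $(1-aq^m)^2 \geq 0.88(aq^m)^2$ and $|t_m|^2 \geq 0.88|z|^2 q^{2m}$. This yields $|P_0| \geq (0.94|z|)^n 2^{-(n+3)}$, an improvement over the imaginary-part estimate $b^n 2^{-(n+3)}$ used in Lemma~\ref{lmL}. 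Multiplying by the standard $Q$, $P^{\ddagger}$, $P^{\dagger}$, $R$ bounds together with $P^{\sharp} \geq (b/|z|)^{\mu_1}$, setting $\rho := |z|/b \geq \sqrt{2}$, and using $\mu_1 \leq \mu_1^0 \leq (\ln \lambda_1)n + 1$ (which follows from $2(n-1)^2/(2n-3) \leq n$ for $n \geq 2$), one arrives at a bound of the form $\ln|\Theta^*| + \ln(|z|-1) \geq nA_1 + A_0$ with $A_1 = 0.343\,\ln\rho + \ln b - 4.70$ and $A_0 = \pi^2/6 + \ln b - 3\ln 2$. At $b = 132$ and $\rho \geq \sqrt{2}$ one computes $A_1 \geq 0.30$ and $A_0 \approx 4.45$, so the bound is positive for every $n \geq 3$.

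The main obstacle is this last cancellation: the $P^{\sharp}$-loss $\mu_1 \ln\rho$ must be absorbed by (i) the $n\ln\rho$ gain in $|P_0|$ coming from the $|z|$-sharpened factor estimate and (ii) the relaxed target, since we need only $1/(|z|-1)$ rather than $1/(b-1)$. The saving comes from the inequality $n + 1 - \mu_1 \geq (1 - \ln\lambda_1)n \approx 0.34\,n$; the strict inequality $\ln \lambda_1 < 1$ (numerically $0.66 < 1$) is what keeps the net coefficient of $\ln\rho$ in $A_1$ positive and closes the estimate uniformly in $n \geq 3$ and $\rho \geq \sqrt{2}$.
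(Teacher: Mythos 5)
Your proof is correct and follows essentially the same route as the paper's: the same use of inequality (\ref{eqQQQ}), the same sharpened bound $|t_m|\geq c\,|z|q^m$ for the factors of $P_0$ (the paper takes $c=\sqrt{0.9}$ via a completed square, you take $c=0.94$ --- note $\sqrt{0.88}=0.938\ldots$, so you should write $0.939$, which changes nothing), the same estimate $P^{\sharp}\geq (b/|z|)^{\mu _1}$, and the same key cancellation $n-\mu _1\geq (1-\ln \lambda _1)n-O(1)$. Your reorganization --- moving $\ln (|z|-1)$ to the left-hand side so that both $A_0$ and $A_1$ are manifestly positive and increasing in $b$ --- neatly avoids the paper's closing monotonicity-in-$b$ argument, and your one-factor estimate of $\tilde{P}$ in the case $q\in (\tilde{q}_1,1/2]$ is a harmless simplification of the paper's two-factor one.
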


\begin{proof}
Suppose first that $q\in (1/2,1)$. We define $n\geq 3$ from conditions 
(\ref{eqqn}). Recall that the number $\mu _1$ was defined in 
Lemma~\ref{lmPsharp} and that inequalities (\ref{mu10}) hold true. For 
$n\geq 3$ equality (\ref{eq2n}) implies  

\begin{equation}\label{eqmun}
\begin{array}{cclcl}
\mu _1&\leq&(\ln \lambda _1)2(n-1)^2/(2n-3)+1&&\\ \\ 
&=&(\ln \lambda _1)(n-1/2+1/2(2n-3))+1&<&(\ln \lambda _1)n+0.782~,
\end{array}
\end{equation}
because $1/2(2n-3)\leq 1/6$ and $(\ln \lambda _1)(-1/2+1/6)+1=0.7811\ldots$. 

Consider a factor $t_m$ from $P_0:=\prod _{m=1}^nt_m$. One has 

\begin{equation}\label{a2b2}
|t_m|^2=(aq^m-1)^2+b^2q^{2m}\geq 0.9(a^2+b^2)q^{2m}~~;
\end{equation}
this follows from 

\begin{equation}\label{a2b2bis}
(a^2+b^2)q^{2m}/10-2aq^m+1=(aq^m-10)^2/10+(b^2q^{2m}-90)/10\geq 0~~;
\end{equation}
the last inequality results from $b\geq 132$ and (\ref{qm}) 
(remember that if 
$q$ satisfies conditions (\ref{eqqn}) with $n\geq 3$, then the 
inequality (\ref{eq4}) holds true), so $b^2q^{2m}\geq (132/8)^2>90$. 
Set $A:=(a^2+b^2)^{(n-\mu _1)/2}$. Hence 

$$\begin{array}{ccccc}
|P_0|&\geq&(a^2+b^2)^{n/2}q^{n(n+1)/2}(0.9)^{n/2}&\geq&
(a^2+b^2)^{n/2}2^{-(n+3)}(0.9)^{n/2}\\
{\rm and}&&&&\\  
P^{\sharp}&\geq&b^{\mu _1}/(a^2+b^2)^{\mu _1/2}&,&\\{\rm so}&&&&\\  
|P_0|P^{\sharp}&\geq&Ab^{(\ln \lambda _1)(n-2)+1}2^{-(n+3)}(0.9)^{n/2}&.&\end{array}$$
(we use inequalities 
(\ref{mu10})). As $(a^2+b^2)^{1/2}\geq 132\sqrt{2}$ and as 
$n-\mu _1>\omega _1n-0.782$, $\omega _1:=1-(\ln \lambda _1)$, 
see (\ref{eqmun}), 
one obtains the minoration $|P_0|P^{\sharp}\geq e^M$, where 

$$\begin{array}{ccl}
M&:=&(\omega _1n-0.782)\ln (132\sqrt{2})+(\ln b)((\ln \lambda _1)(n-2)+1)\\ \\ 
&&-(n+3)\ln 2+(n/2)\ln 0.9~.\end{array}$$
To estimate $P^{\dagger}$ and 
$P^{\ddagger}$ we use Lemma~\ref{lmPddagger}. 
As in the proof of Lemma~\ref{lmL} one can minorize the right-hand 
side of (\ref{eqQQQ}) by 

$$e^{(\pi ^2/6)(1-n)}e^Me^{2(-1.149489n)}e^{-(b+1)n/b^2}~.$$
This expression is of the form $e^{L_1n+L_0}$ with 

$$\begin{array}{ccl}
L_1&=&-\pi ^2/6+\omega _1\ln (132\sqrt{2}) 
+(\ln b)(\ln \lambda _1)\\ \\ &&-\ln 2+(\ln 0.9)/2-2.298978-(b+1)/b^2~,\\ \\  
L_0&=&\pi ^2/6-0.782\ln (132\sqrt{2})+(\ln b)(-2\ln \lambda _1+1)-3\ln 2~.
\end{array}$$
For $a\geq b=132$ one has $|z|\geq 132\sqrt{2}$, also 
$L_1>0.3044>0$ and     
$L_0=-6.0491\ldots$. For $a\geq b=132$, $n\geq 3$ one has 

$$L_1n+L_0\geq -5.136\ldots>-5.224\ldots =
-\ln (132\sqrt{2}-1)~,$$
i.e. $|\Theta ^*|>1/(|z|-1)\geq |G|$. The functions $L_1n+L_0$ and $e^{L_1n+L_0}$ 
when considered as functions in $b$ (for $n\geq 3$ fixed) are increasing 
while the functions $-\ln (b-1)$ and $1/(b-1)$ are decreasing. Therefore 
one has $|\Theta ^*|>1/(|z|-1)\geq |G|$ for $a\geq b\geq 132$, $n\geq 3$ 
from which for $q\in (1/2,1)$ 
the lemma follows. 

Suppose that $q\in (\tilde{q}_1,1/2]$. One deduces from 
Lemma~\ref{lmtwofactors} (as in the proof of Lemma~\ref{lmL}) 
that $P^{\ddagger}P^{\sharp}\geq b^2/(a^2+b^2)$. On the other hand, consider the 
factors $t_1$ and $t_2$. One can apply to them inequality (\ref{a2b2}) 
with $m=1$ and $2$. Hence $|t_1|>1$, $|t_2|>1$, 

$$\tilde{P}\geq |t_1||t_2|>0.9(a^2+b^2)\tilde{q}_1^3~~~\, {\rm and}~~~\, 
\tilde{P}P^{\ddagger}P^{\sharp}>0.9b^2\tilde{q}_1^3>463~.$$
As in the proof of Lemma~\ref{lmL} we show that $R\geq e^{-2\times 133/132^2}$ 
and $P^{\dagger}\geq e^{-2.298978}$, so finally 

$$|\Theta ^*|=\tilde{P}P^{\ddagger}P^{\sharp}QP^{\dagger}R>
463e^{-\pi ^2/6}e^{-2.298978}e^{-2\times 133/132^2}>8.8>|G|~.$$
\end{proof}

Lemmas~\ref{lmL} and \ref{lmLL} together imply that for $q\in (0,1)$ and 
$z=-a+bi$, $b>132$, 
$a>0$, the function $\theta (q,.)$ has no zeros. Theorem~\ref{TM} is proved.


\begin{thebibliography}{Dillo 83}
\bibitem{AnBe} G. E. Andrews, B. C. Berndt,  
Ramanujan's lost notebook. Part II. Springer, NY, 2009.
\bibitem{BeKi} B. C. Berndt, B. Kim, 
Asymptotic expansions of certain partial theta functions. 
Proc. Amer. Math. Soc. 139:11 (2011), 3779--3788.
\bibitem{BrFoRh} K. Bringmann, A. Folsom, R. C. Rhoades, 
Partial theta functions 
and mock modular forms as $q$-hypergeometric series, 
Ramanujan J. 29:1-3 (2012), 295-310. 
http://arxiv.org/abs/1109.6560
\bibitem{Ha} G.~H.~Hardy, 
On the zeros of a class of integral functions, 
Messenger of Mathematics, 34 (1904), 97--101.
\bibitem{Hu} J. I. Hutchinson, 
On a remarkable class of entire functions, 
Trans. Amer. Math. Soc. 25 (1923), 325--332.
\bibitem{KaLoVi} O.M. Katkova, T. Lobova and A.M. Vishnyakova, 
On power series 
having sections with only real zeros. 
Comput. Methods Funct. Theory 3:2 (2003), 425--441.
\bibitem{Ko1} V.~P.~Kostov, On the zeros of a partial theta function, 
Bull. Sci. Math. 137, No. 8 (2013) 1018-1030.
\bibitem{Ko2} V.~P.~Kostov, On the double zeros of a partial theta function, 
Bull. Sci. Math. 140, No. 4 (2016) 98-111.
\bibitem{Ko3} V.~P.~Kostov, On the multiple zeros of a partial theta function, 
Funct. Anal. Appl. 
50, No. 2 (2016) 153-156).
\bibitem{KoSh} V.~P.~Kostov and B.~Z.~Shapiro, Hardy-Petrovitch-Hutchinson's 
problem and partial theta function, Duke Math. J. 162, No. 5 (2013) 825-861.
\bibitem{Ost} I.~V.~Ostrovskii, 
On zero distribution of sections and tails of power series, 
Israel Math. Conf. Proceedings, 15 (2001), 297--310.
\bibitem{Pe} M.~Petrovitch, 
Une classe remarquable de s\'eries enti\`eres, 
Atti del IV Congresso Internationale dei Matematici, Rome (Ser. 1), 2 
(1908), 36--43. 
\bibitem{So} A.~Sokal, 
The leading root of the partial theta function, 
Adv. Math. 229:5 (2012), 2603-2621. 
arXiv:1106.1003.
\bibitem{Wa} S. O. Warnaar, 
Partial theta functions. I. Beyond the lost notebook, 
Proc. London Math. Soc. (3) 87:2 (2003), 363--395.
\end{thebibliography}
\end{document}